\DeclareMathOperator{\Flux}{Flux}
\DeclareMathOperator{\Gen}{Gen}
\theoremstyle{plain}
\newtheorem{theorem}{Theorem}
\newtheorem{lemma}{Lemma}
\newtheorem{corollary}{Corollary}
\newtheorem{definition}{Definition}
\newtheorem{proposition}{Proposition}
\newcommand{\harm}{\mathcal{H}}
\newcommand{\Id}{\mathrm{Id}}
\newcommand{\Symp}{\operatorname{Symp}}
\newcommand{\Ham}{\operatorname{Ham}}
\newcommand{\Iso}{\operatorname{Iso}}
\newcommand{\osc}{\operatorname{osc}}
\begin{document}
	\title{Hofer-Like Geometry Revisited}
	\author{S. Tchuiaga$^a$ \footnote{$^a$Corresponding Author}
		\\[2mm]
		$^a$Department of Mathematics, University of Buea, 
		South West Region, Cameroon\\[2mm]
		{\tt tchuiagas@gmail.com}
	}
	\date{\today}
	\maketitle
	
	\begin{quotation}
		\noindent
		{\footnotesize
			We show that Banyaga's Hofer-like norm, a generalization of the Hofer norm coincides with the classical Hofer norm when restricted to Hamiltonian diffeomorphisms on compact symplectic manifolds. This result proves a conjecture of Banyaga and fills the gap between Hofer and Hofer‑like geometries: the refined Hofer‑like structure degenerates to standard Hofer geometry within the Hamiltonian subgroup.  This equality allows the straightforward extension of essential results from Hofer geometry to the Hofer‑like setting. 
		}
	\end{quotation}
	
	\medskip
	{\bf Keywords:} Flux geometry, Symplectomorphisms, Displacement energies, 2‑parameter deformations, Diameter.
	
	\textbf{2000 Mathematics subject classification: }53Dxx, 58B20.
	
	\markboth 
	{St\'ephane Tchuiaga}
	{A remark on Hofer-like geometry}
	
	\tableofcontents
	
	\section{Introduction}
	Hofer's geometry defines a Finsler‑type geometry on the infinite‑dimensional group of Hamiltonian diffeomorphisms, \(\Ham(M,\omega)\), and is considered intrinsic to this group \cite{Hofer}. It measures the minimal energy required to transform the identity into a given Hamiltonian diffeomorphism via a path consisting entirely of Hamiltonian diffeomorphisms.
	
	Beyond Hofer’s original metric, Hofer‑like metrics have emerged as variations and extensions \cite{Ban10,TD2}. Hofer‑like metrics, while also intrinsic to the group of symplectic diffeomorphisms isotopic to the identity, \(G_\omega(M)\), extend the notion of `size' or `length' to symplectic isotopies that are not necessarily Hamiltonian. These metrics sometimes incorporate information about the flux of the isotopy, thus considering a broader class of paths within the larger group \(G_\omega(M)\).
	
	Motivations for generalizing the Hofer metric include the desire to develop refined methods for quantifying the intrinsic geometric properties of \(G_\omega(M)\) and understanding its relationship with its subgroup \(\Ham(M,\omega)\). Other attempts to define bi‑invariant norms on \(G_\omega(M)\) include:
	\begin{itemize}
		\item Han's norm \cite{Han}: For a fixed \(K>0\), define \(\lVert \psi\rVert_{K} := \min \{ \|\psi\|_{H}, K\}\) if \(\psi\in\Ham(M,\omega)\), and \(\lVert \psi\rVert_{K} := K\) otherwise.
		\item Lalonde–Polterovich's norm \cite{La-Pol}: For a fixed \(\beta>0\), define
		\[
		\lVert \psi\rVert_{\beta} := \sup \{ \|[\psi, f]\|_{H} \mid f\in\Ham(M,\omega),\; \|f\|_{H}\leq \beta\},
		\]
		where \([\psi,f] = \psi\circ f\circ\psi^{-1}\circ f^{-1}\) and \(\|\cdot\|_H\) denotes the Hofer norm.
	\end{itemize}
	These constructions, however, induce metrics with finite diameter on \(\Ham(M,\omega)\), contrasting with the typically infinite diameter associated with the Hofer norm itself. Motivated by the quest for a genuine generalization that retains key features of Hofer's geometry while extending to \(G_\omega(M)\), Banyaga incorporated the flux homomorphism into the definition of the Hofer-like length, providing a way to account for the energy of non-Hamiltonian symplectic isotopies.
	
	Banyaga's Hofer‑like norm, when restricted to \(\Ham(M,\omega)\), considers Hamiltonian diffeomorphisms \(\psi\) that might be connected to the identity by symplectic isotopies \(\Psi\) that leave \(\Ham(M,\omega)\). Recall that any smooth isotopy within \(\Ham(M,\omega)\) starting at the identity is necessarily Hamiltonian \cite{Ban78}. However, a priori, it is conceivable that a non‑Hamiltonian symplectic isotopy \(\Psi\) (with \(\Psi_0=\Id\) and \(\Psi_1=\psi\in\Ham(M,\omega)\)) could realize the infimum defining the Hofer‑like length. For instance, consider a Hamiltonian isotopy \(\Phi\) from \(\Id\) to \(\psi\in\Ham(M,\omega)\). If \(\Xi\) is a smooth loop of symplectomorphisms based at \(\Id\) with non‑trivial flux \(\widetilde{Flux}(\Xi)\neq0\), then the symplectic isotopy \(\Psi:=\Xi\circ\Phi\) (pointwise composition) still connects \(\Id\) to \(\psi\), but \(\Psi\) is not Hamiltonian because \(\widetilde{Flux}(\Psi)=\widetilde{Flux}(\Xi)\neq0\).
	
	The induced Hofer‑like geometry on \(\Ham(M,\omega)\) has been less understood compared to the well‑established Hofer geometry. A key question is whether the flux group can provide ``shortcuts'' between Hamiltonian diffeomorphisms when measured with the Hofer‑like metric. In this paper we answer this question in the negative: the Hofer‑like norm coincides with the classical Hofer norm on \(\Ham(M,\omega)\). More precisely, we prove:
	
	\begin{theorem}\label{Flux-0}
		On a closed symplectic manifold \((M,\omega)\), the Hofer‑like norm agrees with the usual Hofer norm on Hamiltonian diffeomorphisms. For any \(\phi\in\Ham(M,\omega)\),
		\[
		\|\phi\|_{HL} = \|\phi\|_{H}.
		\]
	\end{theorem}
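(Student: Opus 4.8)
The plan is to establish the identity in the sharper form $e_{HL}^{(1,\infty)}(\phi)=\|\phi\|_H$, from which the norm equality follows at once since $\HLnorm{\phi}{(1,\infty)}=\tfrac12\big(e_{HL}^{(1,\infty)}(\phi)+e_{HL}^{(1,\infty)}(\phi^{-1})\big)$ and $\|\phi^{-1}\|_H=\|\phi\|_H$. The upper bound $e_{HL}^{(1,\infty)}(\phi)\le\|\phi\|_H$ is immediate: a Hamiltonian isotopy $\Phi$ from $\Id$ to $\phi$ has vanishing harmonic generator $\mathcal{H}^\Phi=0$, so $l_{HL}^{(1,\infty)}(\Phi)=\int_0^1\osc(U_t^\Phi)\,dt=l_H^{(1,\infty)}(\Phi)$; since Hamiltonian isotopies form a subfamily of all symplectic isotopies joining $\Id$ to $\phi$, passing to the infimum over the larger family only lowers the value, giving $e_{HL}^{(1,\infty)}(\phi)\le\inf\{l_H^{(1,\infty)}(\Phi)\mid\Phi\ \text{Hamiltonian}\}=\|\phi\|_H$. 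The content of the theorem is therefore the reverse inequality.

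For the lower bound I would fix an arbitrary $\Phi\in Iso(\phi)_\omega$ with generator $(U^\Phi,\mathcal{H}^\Phi)$ and build from it a Hamiltonian path to $\phi$ of no greater Hofer length. Since $\phi\in\Ham(M,\omega)$ forces $\Flux(\phi)=0$, the total flux $\widetilde{Flux}(\Phi)=\int_0^1[\mathcal{H}_t^\Phi]\,dt$ necessarily lies in the discrete group $\Gamma_\omega$. Let $\Xi=\{\xi_t\}$ be the Hamiltonian isotopy generated by the exact part alone, i.e.\ by $\{U_t^\Phi\}$; then $\xi_1\in\Ham(M,\omega)$ as the time-one map of a Hamiltonian isotopy, and $l_H^{(1,\infty)}(\Xi)=\int_0^1\osc(U_t^\Phi)\,dt$. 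The discrepancy $c:=\xi_1^{-1}\circ\phi\in\Ham(M,\omega)$ measures precisely the failure of the endpoint to be reached after deleting the harmonic part, and the whole problem reduces to estimating $\|c\|_H$ in terms of the harmonic contribution $\int_0^1\lVert\mathcal{H}_t^\Phi\rVert_{L^2}\,dt$.

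The tool for this is a two-parameter deformation: via the generator bijection let $\Phi^s$ be the symplectic isotopy with generator $(U^\Phi,s\,\mathcal{H}^\Phi)$, so $\Phi^0=\Xi$ and $\Phi^1=\Phi$, and track the endpoint path $g_s:=\phi_1^s$ running from $\xi_1$ to $\phi$. The first-variation formula for flows gives $\partial_s g_s=Z_s\circ g_s$ with $Z_s=\int_0^1(\phi^s_{t,1})_{\ast}Z_t\,dt$, where $Z_t$ is the harmonic field $\omega$-dual to $\mathcal{H}_t^\Phi$ and $\phi^s_{t,1}$ is the flow of $\Phi^s$ from time $t$ to $1$; thus $\iota_{Z_s}\omega$ is an average of pullbacks of the $\mathcal{H}_t^\Phi$ by symplectomorphisms. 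This realizes $c$, equivalently $\{g_s\}$, as an isotopy assembled entirely from the harmonic data of $\Phi$, and the target estimate is the \emph{sharp} bound $\|c\|_H\le\int_0^1\lVert\mathcal{H}_t^\Phi\rVert_{L^2}\,dt$. Granting it, the triangle inequality yields $\|\phi\|_H\le\|\xi_1\|_H+\|c\|_H\le\int_0^1\osc(U_t^\Phi)\,dt+\int_0^1\lVert\mathcal{H}_t^\Phi\rVert_{L^2}\,dt=l_{HL}^{(1,\infty)}(\Phi)$, and taking the infimum over $\Phi\in Iso(\phi)_\omega$ gives $\|\phi\|_H\le e_{HL}^{(1,\infty)}(\phi)$, closing the argument (the $L^\infty$ version follows identically using $\|\phi\|_H^{\infty}=\|\phi\|_H^{(1,\infty)}$).

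The crux — and the step I expect to be genuinely hard — is obtaining the constant $1$ in the estimate on $\|c\|_H$, rather than the mere equivalence constant $C_1$ of Proposition \ref{prop:hodge-respect}. The obstruction is that $\iota_{Z_s}\omega=\int_0^1(\phi^s_{t,1})^{-1\ast}\mathcal{H}_t^\Phi\,dt$ is closed but not harmonic, and the pullbacks by $\phi^s_{t,1}$ preserve $\omega^n$ but not the metric, so a naive $L^2$ comparison loses a multiplicative factor. I would resolve this by two complementary devices. First, the discreteness of $\Gamma_\omega$ together with Proposition \ref{Posit-1} shows that any truly non-Hamiltonian detour, with $\widetilde{Flux}(\Phi)\neq0$, already costs at least $m=\inf_{\gamma\in\Gamma_\omega\setminus\{0\}}\|\gamma\|_{L^2-Hodge}>0$ and hence can never undercut a Hamiltonian path, so the infimum is governed by isotopies of arbitrarily small flux. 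Second, a controlled-harmonic construction replaces $\{g_s\}$ by a gauge-adjusted isotopy whose harmonic contribution is driven arbitrarily close to $\|\widetilde{Flux}(\Phi)\|_{L^2-Hodge}$, sending the spurious constant to $1$ in the limit. Making rigorous this principle — that harmonic excursions can never shorten a path joining two Hamiltonian maps — is where the whole theorem stands or falls.
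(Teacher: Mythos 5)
Your reduction is fine as far as it goes: the upper bound $e_{HL}^{(1,\infty)}(\phi)\le\|\phi\|_H$ is the same trivial observation the paper makes, and splitting an arbitrary $\Phi\in Iso(\phi)_\omega$ into the Hamiltonian isotopy $\Xi$ generated by the exact part $U^\Phi$ plus a Hamiltonian correction $c=\xi_1^{-1}\circ\phi$ is a legitimate reorganization. But the proposal stops exactly where the theorem begins. The whole content of the statement is your ``target estimate'' $\|c\|_H\le\int_0^1\lVert\mathcal{H}_t^\Phi\rVert_{L^2}\,dt$ with constant exactly $1$, and neither device you offer delivers it. Device (a) is a non sequitur: Proposition \ref{Posit-1} gives a \emph{lower} bound $m>0$ on the Hofer-like length of any isotopy with nonzero flux, but this does not prevent such an isotopy from undercutting Hamiltonian paths, since $\|\phi\|_H$ is in general much larger than $m$; moreover, every $\Phi\in Iso(\phi)_\omega$ with $\phi\in\Ham(M,\omega)$ has flux in the discrete group $\Gamma_\omega$, so there is no regime of ``arbitrarily small nonzero flux'' to pass to --- the difficulty already sits at flux zero, where a non-Hamiltonian path (e.g.\ a Hamiltonian path composed with a non-constant, nullhomotopic symplectic loop) could a priori be shorter than every Hamiltonian path joining $\Id$ to $\phi$. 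Device (b), the ``gauge-adjusted, controlled-harmonic isotopy sending the spurious constant to $1$,'' names the desired conclusion but contains no construction; it restates the problem rather than solving it. In effect your argument, as written, proves only the easy inequality.

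For comparison, the paper does not attempt a sharp additive bound of your form. It first kills the flux by concatenation, $\Xi=\Phi\ast_l\Psi^{-1}$ with $[\Psi]$ a loop of the same flux, deforms $\Xi$ rel endpoints to a Hamiltonian isotopy $\Theta$, and controls the correction Hamiltonian through Banyaga's two-parameter family: $U_t^\Theta=\int_0^1\omega(Z_{s,t},V_{s,t})\,ds$, whence $\osc(U_t^\Theta)\le 4L_0\bigl(\sup\|V_{s,t}\|_g\bigr)\bigl(\sup_t\|X_t\|_{HL}\bigr)$ (Corollary \ref{co1}), with finiteness of $\sup\|V_{s,t}\|_g$ supplied by Proposition \ref{Pro-1}. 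This yields a \emph{multiplicative} bound $\|\phi\|_H^{(1,\infty)}\le\bigl(1+4L_0\sup\|V_{s,t}\|_g\bigr)\,l_{HL}^{\infty}(\Xi)$, and only then does the paper invoke the $\Gamma_\phi$ framework of Corollary \ref{Cor-1} to argue that the factor $1+4L_0\sup\|V_{s,t}\|_g$ has infimum $1$ over the relevant class of paths. So the paper's route is: explicit quantitative estimate first, removal of the constant second; your route postulates the constant-$1$ estimate outright. Until you supply a proof of that estimate (or of an analogue of the Corollary \ref{co1}--Proposition \ref{Pro-1}--Corollary \ref{Cor-1} chain, which is where all the work in the paper is concentrated), the proposal has a genuine gap at its central step.
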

	
	Thus, the possibility of using non‑Hamiltonian paths does not shorten the minimal energy needed to realize a Hamiltonian diffeomorphism, and the refined Hofer‑like structure degenerates to standard Hofer geometry on the Hamiltonian subgroup. This equivalence creates practical possibilities, since it allows the straightforward extension of essential results from Hofer geometry (such as non‑degeneracy, bounds on displacement energies, and infinite diameters) to the Hofer‑like setting.\\
	
	The paper is organized as follows. Section \ref{Sec-2} recalls the necessary background: symplectic manifolds, the Hodge decomposition for symplectic isotopies, the flux homomorphism, and the definitions of Hofer and Hofer‑like lengths. Section \ref{Section Proof Theorem} contains the proof of the main theorem, using a two‑parameter construction and a fragmentation lemma. Section \ref{Sec-4} lists several immediate consequences, including the quasi‑isometry of the two norms on \(\Ham(M,\omega)\), the non‑degeneracy of the Hofer‑like norm, and lower bounds for displacement energies. The final section gives a brief conclusion.
	
	\section{Background and Preliminaries}\label{Sec-2}
	Throughout this paper, \((M,\omega)\) denotes a compact, connected symplectic manifold of dimension \(2n\) without boundary (i.e., closed).
	
	\subsection{Symplectic Forms and Manifolds}
	\begin{definition}
		A {\bf symplectic form} on a smooth manifold \(M\) of dimension \(2n\) is a smooth 2‑form \(\omega\) that is:
		\begin{enumerate}
			\item  Closed: \(d\omega=0\).
			\item  Non‑degenerate: For each \(p\in M\), the bilinear form \(\omega_p:T_pM\times T_pM\to\mathbb{R}\) is non‑degenerate. Equivalently, \(\omega^n=\omega\wedge\cdots\wedge\omega\) (\(n\) times) is a volume form.
		\end{enumerate}
		A symplectic manifold is a pair \((M,\omega)\).
	\end{definition}
	
	\subsection{Symplectic Diffeomorphisms and Isotopies}
	\begin{definition}
		A diffeomorphism \(\phi:M\to M\) is a symplectomorphism if \(\phi^*\omega=\omega\).
	\end{definition}
	We denote the group of symplectomorphisms by \(\Symp(M,\omega)\).
	
	\begin{definition}
		A smooth family of diffeomorphisms \(\{\phi_t\}_{t\in[0,1]}\) is a  symplectic isotopy if each \(\phi_t\) is a symplectomorphism. An isotopy \(\{\phi_t\}\) with \(\phi_0=\Id\) is symplectic iff its generating vector field \(X_t=\dot\phi_t\circ\phi_t^{-1}\) satisfies \(\mathcal{L}_{X_t}\omega=0\) for all \(t\).
	\end{definition}
	We denote by \(\Iso(M,\omega)\) the set of symplectic isotopies starting at the identity. Let \(G_\omega(M)=\{\phi_1\mid \{\phi_t\}\in\Iso(M,\omega)\}\) be the group of time‑one maps of such isotopies. For compact \(M\), \(G_\omega(M)\) is the identity component of \(\Symp(M,\omega)\).
	
	\begin{definition}
		A symplectic isotopy \(\{\psi_t\}\) is a Hamiltonian isotopy if its generating vector field \(Y_t\) is Hamiltonian for each \(t\), i.e., there exists a smooth function \(H_t:M\to\mathbb{R}\) (the Hamiltonian) such that \(\iota_{Y_t}\omega = dH_t\). (Sign conventions vary; we use \(\iota_{Y_t}\omega = dH_t\).)
	\end{definition}
	
	\begin{definition}
		A generating Hamiltonian \(H=\{H_t\}\) is  normalized if \(\int_M H_t\,\omega^n=0\) for all \(t\). This makes \(H_t\) unique for a given Hamiltonian vector field.
	\end{definition}
	
	Let \(\Ham(M,\omega)=\{\psi_1\mid \{\psi_t\}\text{ is a Hamiltonian isotopy starting at }\Id\}\). \(\Ham(M,\omega)\) is a normal subgroup of \(G_\omega(M)\). For \(\phi\in G_\omega(M)\), let \(\Iso(\phi)_\omega = \{\Phi=\{\phi_t\}\in\Iso(M,\omega)\mid \phi_1=\phi\}\) be the set of symplectic isotopies from \(\Id\) to \(\phi\).
	
	\subsection{Generator of Symplectic Isotopies via Hodge Decomposition}
	Let \((M,\omega,g)\) be a closed symplectic manifold with a Riemannian metric \(g\). For a symplectic isotopy \(\Phi=\{\phi_t\}\) with generating vector field \(X_t\), the 1‑form \(\alpha_t=\iota_{X_t}\omega\) is closed. By Hodge theory, \(\alpha_t\) has a unique decomposition:
	\[
	\alpha_t = dU_t^\Phi + \mathcal{H}_t^\Phi,
	\]
	where \(\mathcal{H}_t^\Phi\in\harm^1(M,g)\) is the unique harmonic 1‑form in the cohomology class \([\alpha_t]\), and \(dU_t^\Phi\) is exact. We normalize \(U_t^\Phi\) by \(\int_M U_t^\Phi\,\omega^n=0\). Let \(U^\Phi=\{U_t^\Phi\}\), \(\mathcal{H}^\Phi=\{\mathcal{H}_t^\Phi\}\). This gives a bijection \cite{TD3}:
	\[
	\Iso(M,\omega) \longleftrightarrow \mathcal{N}([0,1]\times M,\mathbb{R}) \times \mathcal{H}^1([0,1]\times M),
	\]
	where \(\mathcal{N}([0,1]\times M,\mathbb{R})\) is the space of smooth normalized Hamiltonians and \(\mathcal{H}^1([0,1]\times M)\) the space of smooth families of harmonic 1‑forms.
	
	\begin{definition}
		For \(\Phi\in\Iso(M,\omega)\), the pair \((U^\Phi,\mathcal{H}^\Phi)\) is its generator, denoted \(\Gen(\Phi)=(U^\Phi,\mathcal{H}^\Phi)\).
	\end{definition}
	\begin{itemize}
		\item \(\Phi\) is Hamiltonian iff \(\mathcal{H}^\Phi=0\); then \(\Gen(\Phi)=(U^\Phi,0)\).
		\item \(\Phi\) is harmonic iff \(U^\Phi=0\); then \(\Gen(\Phi)=(0,\mathcal{H}^\Phi)\).
	\end{itemize}
	
	\subsection{Flux Homomorphism and Flux Group}
	Let \(\Phi=\{\phi_t\}\in\Iso(M,\omega)\) with generating vector field \(X_t\). The  flux form is \(\Sigma(\Phi)=\int_0^1\phi_t^*(\iota_{X_t}\omega)\,dt\). This form is closed, and its de Rham cohomology class \([\Sigma(\Phi)]\) depends only on the homotopy class of \(\Phi\) relative to endpoints.
	
	\begin{definition}
		The  flux homomorphism \(\widetilde{Flux}:\widetilde{G_\omega(M)}\to H^1_{dR}(M,\mathbb{R})\) maps the homotopy class \([\Phi]\) in the universal cover to \([\Sigma(\Phi)]\).
	\end{definition}
	\(\widetilde{Flux}\) is a surjective group homomorphism. It descends to a homomorphism \(\Flux:G_\omega(M)\to H^1_{dR}(M,\mathbb{R})/\Gamma_\omega\), where \(\Gamma_\omega\) is the flux group. The following diagram commutes:
	\[
	\begin{tikzcd}
		\widetilde{G_{\omega}(M)} \arrow[r, "\widetilde{Flux}"] \arrow[d, "\pi"'] & H_{dR}^1(M, \mathbb{R}) \arrow[d, "\pi'"] \\
		G_{\omega}(M) \arrow[r, "\Flux"] & H_{dR}^1(M, \mathbb{R}) / \Gamma_{\omega}
	\end{tikzcd}
	\]
	Here \(\pi([\Phi])=\phi_1\) and \(\pi'\) is the quotient map. See \cite{Ban78}.
	
	The  flux group \(\Gamma_\omega = \widetilde{Flux}(\pi_1(G_\omega(M),\Id))\subseteq H^1_{dR}(M,\mathbb{R})\) is a discrete subgroup \cite{Ono}.
	
	\begin{theorem}[\cite{Ban78}]\label{Flux-decomp}
		Let \((M,\omega)\) be closed.
		\begin{enumerate}
			\item Flux Homomorphism: There exists a surjective homomorphism \(\Flux:G_\omega(M)\to H^1(M;\mathbb{R})/\Gamma_\omega\).
			\item Exact Sequence: \(1\to\Ham(M,\omega)\hookrightarrow G_\omega(M)\xrightarrow{\Flux} H^1(M;\mathbb{R})/\Gamma_\omega\to0\) is exact. Hence \(\Ham(M,\omega)\) is normal in \(G_\omega(M)\) and \(G_\omega(M)/\Ham(M,\omega)\cong H^1(M;\mathbb{R})/\Gamma_\omega\).
			\item Decomposition: Every \(\phi\in G_\omega(M)\) decomposes (non‑uniquely) as \(\phi=\phi_{Ham}\circ\psi\), where \(\phi_{Ham}\in\Ham(M,\omega)\) and \(\psi\) is the time‑one map of a harmonic isotopy \(\{\psi_t\}\) with \(\pi'(\widetilde{Flux}(\{\psi_t\}))=\Flux(\phi)\).
		\end{enumerate}
	\end{theorem}
	
	\subsection{Concatenation of Isotopies}
	Let \(f:[0,1]\to[0,1]\) be a smooth, increasing function with \(f(t)=0\) for \(t\in[0,\delta]\) and \(f(t)=1\) for \(t\in[1-\delta,1]\) (\(\delta\in(0,1/2)\)). Define \(\lambda(t)=f(2t)\) for \(t\in[0,1/2]\) and \(\tau(t)=f(2t-1)\) for \(t\in[1/2,1]\). For \(\Phi=\{\phi_t\},\Psi=\{\psi_t\}\in\Iso(M,\omega)\), the {\bf left concatenation} is:
	\[
	(\Psi\ast_l\Phi)_t=
	\begin{cases}
		\phi_{\lambda(t)}, & 0\le t\le 1/2,\\
		\psi_{\tau(t)}\circ\phi_1, & 1/2\le t\le 1.
	\end{cases}
	\]
	\(\Psi\ast_l\Phi\) is a smooth symplectic isotopy from \(\Id\) to \(\psi_1\circ\phi_1\).

	\begin{proposition}	\label{Cont-1}
		The paths \(\Psi\ast_l\Phi\) and \(\Psi\circ\Phi\) are homotopic relative to their endpoints.
	\end{proposition}
	
	\begin{proof}
		We construct the homotopy explicitly using separate reparameterizations for each path. Define two continuous, non-decreasing functions \(a_0, b_0: [0,1] \to [0,1]\) by:
		\[
		a_0(t) = 
		\begin{cases}
			0, & 0 \le t \le 1/2,\\
			\tau(t), & 1/2 \le t \le 1;
		\end{cases}
		\quad \text{and} \quad
		b_0(t) = 
		\begin{cases}
			\lambda(t), & 0 \le t \le 1/2,\\
			1, & 1/2 \le t \le 1.
		\end{cases}
		\]
		Since \(\tau(1/2) = u(0) = 0\) and \(\lambda(1/2) = u(1) = 1\), both \(a_0\) and \(b_0\) are continuous on \([0,1]\). For \(s \in [0,1]\), we define the linear interpolations with the identity parameter \(t\):
		\[
		a_s(t) = (1-s)a_0(t) + st, \quad \text{and} \quad b_s(t) = (1-s)b_0(t) + st.
		\]
		These functions are continuous in both \(s\) and \(t\), map \([0,1]\) to \([0,1]\), and since \(a_s(0) = b_s(0) = 0\) and \(a_s(1) = b_s(1) = 1\) for all \(s \in [0,1]\), they fix the endpoints. We now define the family of paths \(H_s \in \mathcal{P}(G_\Omega(M))\) by:
		\[
		H_s(t) = \psi_{a_s(t)} \circ \phi_{b_s(t)}, \qquad s,t \in [0,1].
		\]
		At \(s=0\), using \(\psi_0 = \mathrm{id}_M\), this gives:
		\[
		H_0(t) = 
		\begin{cases}
			\psi_0 \circ \phi_{\lambda(t)} = \phi_{\lambda(t)}, & 0 \le t \le 1/2,\\
			\psi_{\tau(t)} \circ \phi_1, & 1/2 \le t \le 1,
		\end{cases}
		\]
		which is precisely the left concatenation \(\Psi \ast_l \Phi\). At \(s=1\), we have:
		\[
		H_1(t) = \psi_t \circ \phi_t = (\Psi \circ \Phi)_t.
		\]
		
		The endpoints are fixed because:
		\[
		H_s(0) = \psi_0 \circ \phi_0 = \mathrm{id}_M, \quad \text{and} \quad H_s(1) = \psi_1 \circ \phi_1 \quad \text{for all } s \in [0,1].
		\]
		
		Smoothness of the homotopies follows from standard smoothing techniques (see \cite{Hirsch76}). This completes the proof.
	\end{proof}
	
	\subsection{Norms of 1‑forms and Harmonic Vector Fields}\label{SC0T33}
	Let \((M,\omega,g)\) be compact with metric \(g\). Let \(\harm^1(M,g)\) be the space of harmonic 1‑forms. The pointwise norm \(\lVert\cdot\rVert_g^*\) on \(T^*_xM\) induces the uniform supremum norm \(|\alpha|_0=\sup_{x\in M}\lVert\alpha_x\rVert_g^*\). The \(L^2\)-Hodge norm is \(\lVert\alpha\rVert_{L^2}^2=\int_M\lVert\alpha_x\rVert_g^{*2}\,dVol_g\). For a class \([\alpha]\in H^1_{dR}(M,\mathbb{R})\), its \(L^2\)-Hodge norm is the \(L^2\)-norm of its unique harmonic representative.
	
	On the finite‑dimensional space \(\harm^1(M,g)\), the norms \(|\cdot|_0\) and \(\lVert\cdot\rVert_{L^2}\) are equivalent: there exist constants \(L_0,L_1>0\) such that for any \(\alpha\in\harm^1(M,g)\),
	\begin{equation}\label{Ineq-0}
		L_1\lVert\alpha\rVert_{L^2}\le |\alpha|_0\le L_0\lVert\alpha\rVert_{L^2}.
	\end{equation}
	These constants depend on the metric, but they will be fixed throughout the paper.
	
	\subsection{Hofer Lengths for Hamiltonian Isotopies}
	For a Hamiltonian isotopy \(\Phi\) generated by normalized \(H=\{H_t\}\), the oscillation is \(\osc(f)=\max_M f-\min_M f\). The \(L^{(1,\infty)}\)-Hofer length is \(l_{H}^{(1,\infty)}(\Phi)=\int_0^1\osc(H_t)\,dt\), and the \(L^\infty\)-Hofer length is \(l_{H}^{\infty}(\Phi)=\sup_{t\in[0,1]}\osc(H_t)\). The Hofer norm of \(\phi\in\Ham(M,\omega)\) is
	\[
	\|\phi\|_H^{(1,\infty)}:=\inf\{l_{H}^{(1,\infty)}(\Phi)\mid \Phi\text{ Hamiltonian isotopy from }\Id\text{ to }\phi\},
	\]
	and similarly for \(\|\phi\|_H^{\infty}\). It is known that \(\|\phi\|_H^{\infty}=\|\phi\|_H^{(1,\infty)}\) \cite{Polt93}; we denote this common value by \(\|\phi\|_H\).
	
	\subsection{Hofer‑like Lengths and Norms}
	Given a symplectic isotopy \(\Phi\) with generator \((U^\Phi,\mathcal{H}^\Phi)\), define:
	\[
	l_{HL}^{(1,\infty)}(\Phi)=\int_0^1\bigl(\osc(U_t^\Phi)+\lVert\mathcal{H}_t^\Phi\rVert_{L^2}\bigr)dt,\qquad
	l^\infty_{HL}(\Phi)=\sup_{t\in[0,1]}\bigl(\osc(U_t^\Phi)+\lVert\mathcal{H}_t^\Phi\rVert_{L^2}\bigr).
	\]
	The corresponding energies of \(\phi\in G_\omega(M)\) are
	\[
	e_{HL}^{(1,\infty)}(\phi)=\inf\{l_{HL}^{(1,\infty)}(\Phi)\mid \Phi\in\Iso(\phi)_\omega\},\quad
	e^\infty_{HL}(\phi)=\inf\{l^\infty_{HL}(\Phi)\mid \Phi\in\Iso(\phi)_\omega\}.
	\]
	The Hofer‑like norms are
	\[
	\|\phi\|_{HL}^{(1,\infty)}=\frac12\bigl(e_{HL}^{(1,\infty)}(\phi)+e_{HL}^{(1,\infty)}(\phi^{-1})\bigr),\qquad
	\|\phi\|_{HL}^{\infty}=\frac12\bigl(e^\infty_{HL}(\phi)+e^\infty_{HL}(\phi^{-1})\bigr).
	\]
	These define pseudo‑norms on \(G_\omega(M)\); they are norms on \(\Ham(M,\omega)\) under suitable conditions. For Hamiltonian diffeomorphisms, one always has \(\|\phi\|_{HL}^{(1,\infty)}\le\|\phi\|_H\) because Hamiltonian isotopies are admissible.
	
	\subsection{Flux Group and its Role in Hofer‑like Geometry}\label{sec:flux_role}
	For a symplectic isotopy \(\Phi\in\Iso(M,\omega)\) with endpoint \(\phi\in\Ham(M,\omega)\), its flux lies in the flux group \(\Gamma_\omega\). A fundamental result \cite{Ban78} states that for any such \(\Phi\), there exists a loop \(\Psi\) in \(G_\omega(M)\) based at \(\Id\) whose flux equals \(\widetilde{Flux}(\Phi)\). More precisely, define:
	\begin{definition}
		For \(\Phi\in\Iso(M,\omega)\) with \(\phi_1\in\Ham(M,\omega)\), let
		\[
		\Upsilon(\Phi)=\{[\Psi]\in\pi_1(G_\omega(M),\Id)\mid \widetilde{Flux}([\Psi])=\widetilde{Flux}([\Phi])\}.
		\]
	\end{definition}
	Because \(\Gamma_\omega\) is by definition the set of fluxes of loops, \(\Upsilon(\Phi)\) is non‑empty \cite{Ban78}.
	
	Given a loop \(\Psi\) representing a class in \(\Upsilon(\Phi)\), the left concatenation isotopy \(\Phi\ast_l\Psi^{-1}\) has zero flux and is therefore homotopic (rel. endpoints) to a Hamiltonian isotopy.\\

	For  \(\phi\in\Ham(M,\omega)\), we consider the set 
	\[
	\Gamma_\phi := \{ \Phi\in Iso(\phi)_\omega:\widetilde{Flux} (\Phi) = 0\}.
	\]

	This construction is central to the proof of Theorem \ref{Flux-0}.

	\begin{lemma}\label{lem:bijection}
		Let $(M, \omega)$ be a closed symplectic manifold, and let $\phi \in \Ham(M, \omega)$ be a Hamiltonian diffeomorphism. Then there exists a bijection (up to a homotopy):
		\[
		B: \Iso(\phi)_\omega \longrightarrow \Gamma_\phi \times \Gamma_\omega.
		\]
	\end{lemma}
	
	\begin{proof}
		By definition, for any $\Phi \in \Iso(\phi)_\omega$ and any representative loop $\Psi \in \Upsilon(\Phi)$, the path $\Xi = \Phi \ast_l \Psi^{-1}$ belongs to $\Gamma_\phi$. Its flux is:
		\[
		\widetilde{Flux}(\Xi) = \widetilde{Flux}(\Phi \ast_l \Psi^{-1}) = \widetilde{Flux}(\Phi) - \widetilde{Flux}(\Psi) = 0.
		\]
		For each flux class $\gamma \in \Gamma_\omega$, we choose a representative loop $L_\gamma$ based at the identity in $G_\omega(M)$ such that $\widetilde{Flux}(L_\gamma) = \gamma$, choosing $L_0$ to be the constant loop at the identity.
		
		We define the map $B: \Iso(\phi)_\omega \longrightarrow \Gamma_\phi \times \Gamma_\omega$ by: $
		B(\Phi) = \left( \Phi \ast_l L_{\widetilde{Flux}(\Phi)}^{-1}, \, \widetilde{Flux}(\Phi) \right).$ 
		Since $L_{\widetilde{Flux}(\Phi)}$ is a loop with the same flux as $\Phi$, it belongs to $\Upsilon(\Phi)$, meaning that the first component $\Phi \ast_l L_{\widetilde{Flux}(\Phi)}^{-1}$ indeed lies in $\Gamma_\phi$.	Conversely, we define the map $A: \Gamma_\phi \times \Gamma_\omega \longrightarrow \Iso(\phi)_\omega$ by: $
		A(\Xi, \gamma) = \Xi \ast_l L_\gamma.$ 
		We verify that these maps are mutually inverse:
		\begin{enumerate}
			\item For any $\Phi \in \Iso(\phi)_\omega$ with $\gamma = \widetilde{Flux}(\Phi)$, we have:
			\[
			A(B(\Phi)) = A\left( \Phi \ast_l L_\gamma^{-1}, \, \gamma \right) = (\Phi \ast_l L_\gamma^{-1}) \ast_l L_\gamma.
			\]
			Under standard path parametrization conventions (or by considering homotopy classes of paths relative to the fixed endpoints), the double concatenation $(\Phi \ast_l L_\gamma^{-1}) \ast_l L_\gamma$ is homotopic to $\Phi$.
			\item For any $(\Xi, \gamma) \in \Gamma_\phi \times \Gamma_\omega$, we have:
			\[
			B(A(\Xi, \gamma)) = B(\Xi \ast_l L_\gamma) = \left( (\Xi \ast_l L_\gamma) \ast_l L_\gamma^{-1}, \, \widetilde{Flux}(\Xi \ast_l L_\gamma) \right).
			\]
			Since $\Xi \in \Gamma_\phi$, we have $\widetilde{Flux}(\Xi) = 0$, which implies $\widetilde{Flux}(\Xi \ast_l L_\gamma) = \gamma$. The first coordinate $(\Xi \ast_l L_\gamma) \ast_l L_\gamma^{-1}$ is homotopic to $\Xi$.
		\end{enumerate}
		Using the smooth reparameterization and concatenation conventions defined in Section 2.5, these assignments yield the desired bijection.
	\end{proof}

	\begin{lemma}[Stability of Hofer-like Length under Homotopy]\label{lem:homotopy_stability}
		Let $(M, \omega, g)$ be a closed symplectic manifold. Let $H: [0,1] \times [0,1] \to G_\omega(M)$ be a smooth homotopy of symplectic isotopies relative to endpoints, meaning that for each $s \in [0,1]$, the path $\Phi_s = \{H(s, t)\}_{t \in [0,1]}$ is a symplectic isotopy satisfying $H(s, 0) = \Id$ and $H(s, 1) = \phi$ for some fixed $\phi \in G_\omega(M)$. 	Then, both the $L^{(1,\infty)}$-Hofer-like length and the $L^\infty$-Hofer-like length of $\Phi_s$, given respectively by:
		\[
		s \mapsto l_{HL}^{(1,\infty)}(\Phi_s) \quad \text{and} \quad s \mapsto l_{HL}^{\infty}(\Phi_s),
		\]
		are continuous functions of the homotopy parameter $s \in [0,1]$.
	\end{lemma}
	
	\begin{proof}
		For each $s \in [0,1]$, the generating vector field $X_{s,t}$ of the symplectic isotopy $\Phi_s$ is given by:
		\[
		X_{s,t} = \frac{\partial H(s,t)}{\partial t} \circ H(s,t)^{-1}.
		\]
		Since the homotopy $H(s,t)$ is smooth on the compact domain $[0,1] \times [0,1] \times M$, the family of vector fields $X_{s,t}$ is smooth in all variables $(s,t,x)$.	For each $(s,t) \in [0,1] \times [0,1]$, we apply the Hodge decomposition to the closed 1-form $\alpha_{s,t} = \iota_{X_{s,t}}\omega$:
		\[
		\alpha_{s,t} = dU_{s,t} + \mathcal{H}_{s,t},
		\]
		where $U_{s,t} \in C^\infty(M)$ is the unique normalized smooth function satisfying $\int_M U_{s,t}\,\omega^n = 0$, and $\mathcal{H}_{s,t} \in \harm^1(M,g)$ is the unique harmonic 1-form in the de Rham cohomology class $[\alpha_{s,t}]$.	Since the Hodge projection operator on a compact Riemannian manifold is a bounded linear operator that preserves smoothness, the normalized Hamiltonian $U_{s,t}(x)$ and the harmonic form $\mathcal{H}_{s,t}(x)$ depend smoothly on the parameters $(s,t)$. In particular, the map $(s,t,x) \mapsto U_{s,t}(x)$ and the map $(s,t,x) \mapsto \mathcal{H}_{s,t}(x)$ are smooth on $[0,1] \times [0,1] \times M$. 	We define the integrand function $f: [0,1] \times [0,1] \to \mathbb{R}$ by:
		\[
		f(s, t) = \osc(U_{s,t}) + \|\mathcal{H}_{s,t}\|_{L^2}.
		\]
		Because the domain $[0,1] \times [0,1] \times M$ is compact, the functions:
		\[
		(s,t) \mapsto \osc(U_{s,t}) = \max_{x \in M} U_{s,t}(x) - \min_{x \in M} U_{s,t}(x) \quad \text{and} \quad (s,t) \mapsto \|\mathcal{H}_{s,t}\|_{L^2}
		\]
		are continuous on $[0,1] \times [0,1]$. Thus, $f(s,t)$ is a continuous function on the compact set $[0,1] \times [0,1]$.
		\begin{enumerate}
			\item \textbf{Continuity of $l_{HL}^{(1,\infty)}(\Phi_s)$}: The $L^{(1,\infty)}$-length is defined by the integral:
			\[
			l_{HL}^{(1,\infty)}(\Phi_s) = \int_0^1 f(s, t) \, dt.
			\]
			Since $f(s,t)$ is continuous on $[0,1] \times [0,1]$, the parameter-dependent integral is continuous in $s \in [0,1]$, by
			the continuity of the parameter-dependent integral for a continuous integrand on a compact set (see e.g. \cite[Theorem 9.42]{Rudin76}) . 
			
			\item \textbf{Continuity of $l_{HL}^{\infty}(\Phi_s)$}: The $L^\infty$-length is defined by the supremum:
			\[
			l_{HL}^{\infty}(\Phi_s) = \sup_{t \in [0,1]} f(s, t).
			\]
			Since $f(s,t)$ is continuous on the compact set $[0,1] \times [0,1]$, the Maximum Theorem (or Berge's Maximum Theorem, \cite[Theorem 17.31]{AliprantisBorder06}) ensures that the marginal function $s \mapsto \sup_{t \in [0,1]} f(s,t)$ is continuous on $[0,1]$.
		\end{enumerate}
		This shows the stability of the Hofer-like length under smooth homotopies relative to endpoints.
	\end{proof}
	\begin{lemma}[Hofer-like energy via zero-flux paths]\label{lem:hofer_like_infimum}
		Let $(M, \omega)$ be a closed symplectic manifold, and let $\phi \in \Ham(M, \omega)$ be a Hamiltonian diffeomorphism. Then the nested infimum:
		\[
		\inf_{\Phi'\in \Iso(\phi)_\omega}\left( \inf_{\Psi_0\in \Upsilon(\Phi')}\left( l_{HL}^{\infty}(\Phi' \ast_l \Psi^{-1}_0)\right) \right)
		\]
		is exactly equal to the Hofer-like energy $e_{HL}^\infty(\phi)$.
	\end{lemma}
	
	\begin{proof}
		By Lemma \ref{lem:bijection}, every path of the form $\Phi' \ast_l \Psi_0^{-1}$ is a zero-flux path $\Xi \in \Gamma_\phi$, and conversely, every zero-flux path can be represented in this form. Therefore, the nested infimum simplifies to:
		\begin{equation}\label{eq:step1}
			\inf_{\Phi'\in \Iso(\phi)_\omega}\left( \inf_{\Psi_0\in \Upsilon(\Phi')}\left( l_{HL}^{\infty}(\Phi' \ast_l \Psi^{-1}_0)\right) \right) = \inf_{\Xi \in \Gamma_\phi} l_{HL}^{\infty}(\Xi).
		\end{equation}
		
		Next, we relate $\inf_{\Xi \in \Gamma_\phi} l_{HL}^{\infty}(\Xi)$ to the overall Hofer-like energy $e_{HL}^\infty(\phi)$, which is defined by taking the infimum over all symplectic isotopies:
		\[
		e_{HL}^\infty(\phi) = \inf_{\Phi \in \Iso(\phi)_\omega} l_{HL}^\infty(\Phi).
		\]
		By Lemma \ref{lem:bijection}, every symplectic isotopy $\Phi \in \Iso(\phi)_\omega$ with flux $\gamma \in \Gamma_\omega$ can be decomposed up to homotopy as $\Phi = \Xi \ast_l L_\gamma$, where $\Xi \in \Gamma_\phi$ and $L_\gamma$ is a loop representing the class $\gamma$.
		For the $L^\infty$-type length, concatenation of two paths results in the maximum of their lengths:
		\[
		l_{HL}^\infty(\Xi \ast_l L_\gamma) = \max\bigl(l_{HL}^\infty(\Xi),\, l_{HL}^\infty(L_\gamma)\bigr).
		\]
		Consequently, we have the inequality: $
		l_{HL}^\infty(\Phi) \ge l_{HL}^\infty(\Xi).$ 
		Taking the infimum over all paths $\Phi \in \Iso(\phi)_\omega$ gives:
		\[
		e_{HL}^\infty(\phi) \ge \inf_{\Xi \in \Gamma_\phi} l_{HL}^\infty(\Xi).
		\]
		Conversely, fix $\epsilon > 0$ and pick $\Xi_\epsilon \in \Gamma_\phi$ such that $l_{HL}^\infty(\Xi_\epsilon) \le \inf_{\Xi \in \Gamma_\phi} l_{HL}^\infty(\Xi) + \epsilon$.
		Let $L_0$ be the constant loop (zero flux, length $0$). The path $\Phi_\epsilon = \Xi_\epsilon \ast_l L_0$ belongs to $\Iso(\phi)_\omega$ and satisfies
		\[
		l_{HL}^\infty(\Phi_\epsilon) = \max\bigl(l_{HL}^\infty(\Xi_\epsilon), 0\bigr) = l_{HL}^\infty(\Xi_\epsilon).
		\]
		Therefore,
		\[
		e_{HL}^\infty(\phi) \le l_{HL}^\infty(\Phi_\epsilon) \le \inf_{\Xi \in \Gamma_\phi} l_{HL}^\infty(\Xi) + \epsilon.
		\]
		Letting $\epsilon \to 0$, we obtain the reverse inequality $e_{HL}^\infty(\phi) \le \inf_{\Xi \in \Gamma_\phi} l_{HL}^\infty(\Xi)$.
		Combining both inequalities yields $e_{HL}^\infty(\phi) = \inf_{\Xi \in \Gamma_\phi} l_{HL}^\infty(\Xi)$, which together with (\ref{eq:step1}) proves the lemma.
	\end{proof}
	
	\section{On the Proof of Theorem \ref{Flux-0}}\label{Section Proof Theorem}
	The proof of Theorem \ref{Flux-0} will proceed through the following key steps:
	
	\begin{itemize}
		\item We will establish a connection between Hofer-like norms on symplectic vector fields and Hofer-like norms on diffeomorphisms.
		\item We will construct a specific two-parameter family of vector fields derived from harmonic 1-forms.
		\item We will derive a crucial oscillation bound (Corollary \ref{co1}) for this two-parameter family, relating it to Hofer-like lengths.
		\item We will demonstrate the boundedness of a related vector field family (Proposition \ref{Pro-1}), ensuring the estimates are well-behaved.
		\item Finally, we will combine these intermediate results, leveraging the properties of flux groups and concatenation of isotopies, to show that the Hofer-like norm reduces to the Hofer norm on Hamiltonian diffeomorphisms, thus proving Theorem \ref{Flux-0}.
	\end{itemize}
	
	\subsection{Hofer-like Norm on Symplectic Vector Fields}\label{Hofer-like norm on vector fields}
	
	Let \(\chi_\omega(M)\) denote the space of symplectic vector fields on \((M, \omega)\). For any \(Y \in \chi_\omega(M)\), we have the Hodge decomposition of the 1-form \(\iota_Y \omega\):
	\begin{equation}\label{eq:Hodge_decomp_vector_field}
		\iota_Y \omega = dU_Y + \mathcal{H}_Y,
	\end{equation}
	where \(\mathcal{H}_Y \in harm^1(M, g)\) is the unique harmonic 1-form, and \(U_Y \in C^\infty(M)\) is a smooth function, uniquely determined if we impose the normalization \(\int_M U_Y \omega^n = 0\). We define the Hofer-like norm of a symplectic vector field \(Y\) as:
	\begin{definition}
		The Hofer-like norm of a symplectic vector field \(Y \in \chi_\omega(M)\) is given by:
		\begin{equation}\label{eq:HL_norm_vector_field}
			\|Y\|_{HL} = osc(U_Y) + \|\mathcal{H}_Y\|_{L^2},
		\end{equation}
		where \(osc(U_Y) = \max_{M} U_Y - \min_{M} U_Y\) is the oscillation of \(U_Y\), and \(\|\mathcal{H}_Y\|_{L^2}\) is the \(L^2\)-Hodge norm of the harmonic 1-form \(\mathcal{H}_Y\).
	\end{definition}
	(See Banyaga \cite{Ban10} for the introduction of this norm).
	
	\subsection{Construction of a Two-Parameter Family of Vector Fields}\label{Two-parameter vector fields}
	
	Given a smooth family of harmonic 1-forms \(\{\mathcal{H}_t\}_{t \in [0,1]}\), let \(\{X_t\}_{t \in [0,1]}\) be the corresponding family of harmonic vector fields, defined by \(\iota_{X_t} \omega = \mathcal{H}_t\) for each \(t \in [0,1]\). We define a two-parameter family of vector fields \(\{Z_{s,t}\}_{(s,t) \in [0,1] \times [0,1]}\) by:
	\begin{equation}\label{Rmk2}
		Z_{s,t} = t X_s - 2s \int_0^t X_u du.
	\end{equation}
	For each fixed \(t \in [0,1]\), we construct a family of diffeomorphisms \(\{G_{s,t}\}_{s \in [0,1]}\) by integrating \(Z_{s,t}\) with respect to \(s\), starting from \(G_{0,t} = \text{Id}\):
	\begin{equation}\label{eq:G_flow}
		\frac{\partial}{\partial s} G_{s,t}(x) = Z_{s,t}(G_{s,t}(x)), \quad G_{0,t}(x) = x, \quad x \in M.
	\end{equation}
	Furthermore, we define another two-parameter family of vector fields \(\{V_{s,t}\}_{(s,t) \in [0,1] \times [0,1]}\) by:
	\begin{equation}\label{Rmk3}
		V_{s,t}(G_{s,t}(x)) = \frac{\partial}{\partial t} G_{s,t}(x).
	\end{equation}
	These constructions will be used to relate Hofer and Hofer-like geometries.
	
	\begin{corollary}\label{co1} 
		Let \(\{X_t\}\), \(\{Z_{s,t}\}\), and \(\{V_{s,t}\}\) be the smooth families of vector fields defined in Subsection \ref{Two-parameter vector fields}. Then, for each \(u \in [0,1]\) and fixed \(t \in [0,1]\), the oscillation of the function \(F_{u,t}(x) = \int_0^u \omega(Z_{s,t}, V_{s,t})(x) ds\) is bounded by:
		\begin{equation}\label{eq:osc_bound}
			osc\left(\int_0^u \omega(Z_{s,t}, V_{s,t}) ds\right) \leq 4 L_0 \left( \sup_{(s,t,x) \in [0,1]^2 \times M} \|V_{s,t}(x)\|_g \right) \left( \sup_{t \in [0,1]} \|X_t\|_{HL} \right),
		\end{equation}
		where \(L_0 > 0\) is the constant from the norm equivalence inequality \eqref{Ineq-0} in Subsection \ref{SC0T33}.
	\end{corollary}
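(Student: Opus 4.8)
The plan is to reduce the oscillation estimate to a pointwise supremum bound on the integrand and then exploit the harmonic structure of the fields $X_s$. Write $C_V := \sup_{(s,t,x)\in[0,1]^2\times M}\|V_{s,t}(x)\|_g$ and $C_X := \sup_{r\in[0,1]}\|X_r\|_{HL}$ for the two quantities appearing on the right-hand side. First I would expand the integrand using $Z_{s,t} = tX_s - 2s\int_0^t X_u\,du$: since $\omega$ is linear in its first slot and $\iota_{X_u}\omega = \mathcal{H}_u$, pairing against $V_{s,t}$ gives
\[
\omega(Z_{s,t}, V_{s,t}) = t\,\mathcal{H}_s(V_{s,t}) - 2s\int_0^t \mathcal{H}_\tau(V_{s,t})\,d\tau.
\]
This is the key algebraic step: it turns $\omega(Z_{s,t},V_{s,t})$ into a combination of pairings of \emph{harmonic} $1$-forms with $V_{s,t}$, which is exactly what lets the Hofer-like norm enter.

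Next I would estimate each pairing pointwise. By the Cauchy--Schwarz inequality for the metric $g$, at each $x\in M$ one has $|\mathcal{H}_\tau(V_{s,t})(x)| \leq \|(\mathcal{H}_\tau)_x\|_g^{*}\,\|V_{s,t}(x)\|_g \leq |\mathcal{H}_\tau|_0\, C_V$, and the norm-equivalence inequality \eqref{Ineq-0} bounds $|\mathcal{H}_\tau|_0 \leq L_0\|\mathcal{H}_\tau\|_{L^2}$. The crucial observation is that $\|\mathcal{H}_\tau\|_{L^2} = \|X_\tau\|_{HL}$: because $X_\tau$ is harmonic, the exact part of the Hodge decomposition of $\iota_{X_\tau}\omega$ vanishes, so $\osc(U_{X_\tau})=0$ and only the $L^2$-term survives. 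Hence each pairing is bounded by $L_0\,C_X\,C_V$.

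Assembling these bounds gives $|\omega(Z_{s,t},V_{s,t})(x)| \leq L_0 C_X C_V\, t(1+2s)$, and integrating over $s\in[0,u]$ yields $|F_{u,t}(x)| \leq L_0 C_X C_V\, t(u+u^2) \leq 2 L_0 C_X C_V$, using $t,u\in[0,1]$. Converting this uniform bound into an oscillation bound via $\osc(f)\leq 2\sup_x|f(x)|$ then produces the stated coefficient $4L_0$. I do not anticipate a genuine obstacle; the content is careful bookkeeping of constants. The two points needing attention are the identity $\|X_\tau\|_{HL}=\|\mathcal{H}_\tau\|_{L^2}$ (the vanishing of the oscillation term for harmonic fields, which is what makes the right-hand side depend on the Hofer-like norm rather than separately on an $L^2$-norm), and tracking the elementary factors $t(1+2s)$ and $u+u^2$ so that they are controlled by $2$, giving precisely $4L_0$ after the $\osc\leq 2\sup$ step rather than a larger constant.
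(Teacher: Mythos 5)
Your proposal is correct and follows essentially the same route as the paper's proof: expand $Z_{s,t}=tX_s-2s\int_0^t X_u\,du$, bound the pairing with $V_{s,t}$ pointwise via Cauchy--Schwarz, pass from $|\mathcal{H}_\tau|_0$ to $L_0\|\mathcal{H}_\tau\|_{L^2}$ using \eqref{Ineq-0}, identify this with $\|X_\tau\|_{HL}$ (the paper writes the inequality $\|\mathcal{H}_\tau\|_{L^2}\leq\|X_\tau\|_{HL}$, while you correctly note it is an equality since the exact part vanishes for harmonic fields), integrate $t(1+2s)$ over $s\in[0,u]$, and absorb $t(u+u^2)\leq 2$ into the constant. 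The only cosmetic difference is the order in which the factor $2$ from $\osc(f)\leq 2\sup|f|$ is applied (you at the end, the paper at the start), which changes nothing.
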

	
	\begin{proof}
		By the definition of oscillation, for each \(u, t\):
		\begin{equation}\label{EQ2}
			osc \left(\int_0^u \omega(Z_{s,t}, V_{s,t}) ds\right) \leq 2 \int_0^u \sup_{x \in M} |\omega(Z_{s,t}, V_{s,t})(x)| ds.
		\end{equation}
		For each fixed \(s, t\) and for all \(x \in M\), we estimate \(|\omega(Z_{s,t}, V_{s,t})(x)|\):
		\begin{align}\label{EQ3}
			|\omega(Z_{s,t}, V_{s,t})(x)| &= |(\iota_{Z_{s,t}} \omega)(V_{s,t})(x)| \leq \|\iota_{Z_{s,t}} \omega\|_0^* \|V_{s,t}(x)\|_g \\
			&\leq \|\iota_{Z_{s,t}} \omega\|_0^* \sup_{(s',t',z) \in [0,1]^2 \times M} \|V_{s',t'}(z)\|_g.
		\end{align}
		Now, using the definition of \(Z_{s,t} = t X_s - 2s \int_0^t X_u du\) and \(\iota_{X_s} \omega = \mathcal{H}_s\):
		\begin{align*}
			\|\iota_{Z_{s,t}} \omega\|_0^* &= \|\iota_{tX_s - 2s \int_0^t X_u du} \omega\|_0^* = \|t \iota_{X_s} \omega - 2s \int_0^t \iota_{X_u} \omega du\|_0^* \\
			&= \|t \mathcal{H}_s - 2s \int_0^t \mathcal{H}_u du\|_0^* \leq t \|\mathcal{H}_s\|_0^* + 2s \int_0^t \|\mathcal{H}_u\|_0^* du \\
			&\leq t |\mathcal{H}_s|_0 + 2s \int_0^t |\mathcal{H}_q|_0 dq \leq t L_0 \|\mathcal{H}_s\|_{L^2} + 2s \int_0^t L_0 \|\mathcal{H}_q\|_{L^2} dq \\
			&\leq L_0 \left( t \|\mathcal{H}_s\|_{L^2} + 2s \int_0^t \|\mathcal{H}_q\|_{L^2} dq \right) \leq L_0 \left( t \|X_s\|_{HL} + 2s \int_0^t \|X_q\|_{HL} dq \right) \\
			&\leq L_0 \left( t \sup_{t'} \|X_{t'}\|_{HL} + 2s \int_0^t \sup_{t'} \|X_{t'}\|_{HL} dq \right) = L_0 (t + 2st) \sup_{t'} \|X_{t'}\|_{HL}.
		\end{align*}
		Substituting this into \eqref{EQ3} and then into \eqref{EQ2}:
		\begin{align*}
			osc \left(\int_0^u \omega(Z_{s,t}, V_{s,t}) ds\right) &\leq 2 \int_0^u L_0 (t + 2st) \sup_{t'} \|X_{t'}\|_{HL} \sup_{(s',t',z)} \|V_{s',t'}(z)\|_g ds \\
			&\leq 2 L_0 \sup_{t'} \|X_{t'}\|_{HL} \sup_{(s',t',z)} \|V_{s',t'}(z)\|_g \int_0^u (t + 2st) ds \\
			&= 2 L_0 \sup_{t'} \|X_{t'}\|_{HL} \sup_{(s',t',z)} \|V_{s',t'}(z)\|_g t\left[ s + s^2 \right]_0^u \\
			&= 2 L_0 \sup_{t'} \|X_{t'}\|_{HL} \sup_{(s',t',z)} \|V_{s',t'}(z)\|_g \left( tu + tu^2 \right) \\
			&\leq 2 L_0 \sup_{t'} \|X_{t'}\|_{HL} \sup_{(s',t',z)} \|V_{s',t'}(z)\|_g (1 + u) tu \leq 4 L_0 \sup_{t'} \|X_{t'}\|_{HL} \sup_{(s',t',z)} \|V_{s',t'}(z)\|_g 
		\end{align*}
		(Since \(t, u \in [0,1]\), \((1 + u) tu\leq (1 + 1)(1) \leqslant 2\), and we are being generous with constants). A tighter bound could be obtained, but the form \eqref{eq:osc_bound} is sufficient.
	\end{proof}
	
	\begin{proposition}\label{Pro-1}
		Let \((M, g)\) be a closed oriented Riemannian manifold. Let \(\{Z_{s,t}\}_{(s,t) \in [0,1] \times [0,1]}\) be a bounded smooth two-parameter family of vector fields on \(M\). Define \(\{G_{s,t}\}_{(s,t) \in [0,1] \times [0,1]}\) by integrating \(Z_{s,t}\) with respect to \(s\) as in \eqref{eq:G_flow}. Define \(\{V_{s,t}\}_{(s,t) \in [0,1] \times [0,1]}\) by \(V_{s,t}(G_{s,t}(x)) = \frac{\partial}{\partial t} G_{s,t}(x)\). Then, \(\{V_{s,t}\}_{(s,t) \in [0,1] \times [0,1]}\) is also a bounded smooth two-parameter family of vector fields on \(M\).
	\end{proposition}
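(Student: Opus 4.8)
The plan is to split the assertion into two parts: the \emph{smoothness} of $\{V_{s,t}\}$ as a two-parameter family of vector fields, and its \emph{boundedness} in the metric $g$. The decisive structural observation is that, by its defining relation \eqref{Rmk3}, $V_{s,t}$ is simply the $t$-derivative of the flow transported back to $M$, namely $V_{s,t} = (\partial_t G_{s,t}) \circ G_{s,t}^{-1}$. Hence everything follows once I control $G_{s,t}$ together with its inverse and the map $W_{s,t} := \partial_t G_{s,t}$.

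First I would record well-posedness. Since $M$ is closed, the $s$-dependent vector fields $Z_{s,t}$ (with $t$ entering only as a parameter) admit solutions of \eqref{eq:G_flow} on the whole finite interval $s \in [0,1]$ with no escape to infinity, so each $G_{s,t}$ is a well-defined diffeomorphism of $M$. The standard theorem on smooth dependence of solutions of ODEs on initial conditions and parameters then yields that $(s,t,x) \mapsto G_{s,t}(x)$ is jointly smooth on $[0,1]^2 \times M$; smoothness of $G_{s,t}^{-1}$ in $(s,t,y)$ follows from the inverse function theorem applied fibrewise. Consequently $W_{s,t} = \partial_t G_{s,t}$ is jointly smooth, and since $V_{s,t} = W_{s,t} \circ G_{s,t}^{-1}$ is a composition of jointly smooth maps, $\{V_{s,t}\}$ is a smooth two-parameter family of vector fields, with $V_{s,t}(G_{s,t}(x)) = W_{s,t}(x) \in T_{G_{s,t}(x)}M$ a genuine tangent vector at the image point.

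For boundedness I would derive the variational (first-order) equation satisfied by $W_{s,t}$. Differentiating \eqref{eq:G_flow} in $t$ and exchanging the mixed partials $\partial_s \partial_t = \partial_t \partial_s$ gives a linear inhomogeneous ODE in the flow variable $s$,
\[
\partial_s W_{s,t}(x) = (\partial_t Z_{s,t})(G_{s,t}(x)) + (\nabla Z_{s,t})(G_{s,t}(x))\, W_{s,t}(x), \qquad W_{0,t}(x) = 0,
\]
the initial condition being $0$ because $G_{0,t}(x) = x$ is independent of $t$. As $Z$ is a smooth family on the compact set $[0,1]^2 \times M$, both $\sup \lVert \partial_t Z_{s,t}\rVert_g$ and $\sup \lVert \nabla Z_{s,t}\rVert_g$ are finite, so Gr\"onwall's inequality bounds $\lVert W_{s,t}(x)\rVert_g$ uniformly in $(s,t,x)$. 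Since $G_{s,t}$ is a bijection, this is precisely a uniform bound on $\lVert V_{s,t}\rVert_g$. (Alternatively, one may just observe that $\lVert V_{s,t}(x)\rVert_g$ is continuous on the compact set $[0,1]^2 \times M$ and hence bounded; the variational argument has the advantage of producing an explicit constant in terms of the $C^1$-size of $Z$, which is convenient for the estimate in Corollary \ref{co1}.)

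The only point requiring care is that $W_{s,t}(x)$ is based at the moving point $G_{s,t}(x)$, so both the differentiation of its norm along $s$ and the meaning of $\nabla Z$ must be taken with respect to the Levi--Civita connection of $g$ (or, equivalently, one passes to a finite coordinate atlas of $M$ and patches the estimates). I expect this bookkeeping of base points, rather than any genuine analytic difficulty, to be the main obstacle, and it is entirely controlled by the compactness of $M$.
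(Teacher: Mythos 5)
Your proposal follows essentially the same route as the paper's proof: differentiate the flow equation \eqref{eq:G_flow} in $t$ to obtain the linear inhomogeneous variational ODE for $W_{s,t}=\partial_t G_{s,t}$ with zero initial condition, bound the inhomogeneous and linear terms by the uniform $C^1$-size of $Z$, apply Gr\"onwall, and transfer the bound to $V_{s,t}$ using that $G_{s,t}$ is a diffeomorphism. Your additional remarks --- the explicit well-posedness/smooth-dependence justification, the observation that continuity on the compact set $[0,1]^2\times M$ already yields boundedness, and the flag that differentiating norms of vectors at moving base points requires the Levi--Civita connection (a point the paper's proof glosses over) --- are correct refinements rather than a different argument.
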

	
	\begin{proof}
		Let \(\|\cdot\|_{op}\) denote the operator norm for linear maps \(T_xM \rightarrow T_xM\). Since \(M\) is closed and \(Z_{s,t}\) is a bounded smooth two-parameter family of vector fields, there exist constants \(N, K > 0\) such that for all \(x \in M\) and \((s,t) \in [0,1] \times [0,1]\):
		\[
		\left\| \frac{\partial Z_{s,t}}{\partial t}(x) \right\|_{g,x} \leq N, \quad \|DZ_{s,t}(x)\|_{op} \leq K,
		\]
		where \(\|\cdot\|_{g,x}\) is the norm on \(T_xM\) induced by \(g\). Let \(U_{s,t}(x) = \frac{\partial G_{s,t}(x)}{\partial t} = V_{s,t}(G_{s,t}(x))\). Differentiating \eqref{eq:G_flow} with respect to \(t\):
		\[
		\frac{\partial}{\partial s} U_{s,t}(x) = \frac{\partial}{\partial t} \left( \frac{\partial}{\partial s} G_{s,t}(x) \right) = \frac{\partial}{\partial t} Z_{s,t}(G_{s,t}(x)) = \frac{\partial Z_{s,t}}{\partial t}(G_{s,t}(x)) + DZ_{s,t}(G_{s,t}(x)) \cdot \frac{\partial G_{s,t}(x)}{\partial t}.
		\]
		Thus, we have the linear ODE for \(U_{s,t}(x)\) (for fixed \(t, x\) as a function of \(s\)):
		\[
		\frac{\partial}{\partial s} U_{s,t}(x) = \frac{\partial Z_{s,t}}{\partial t}(G_{s,t}(x)) + DZ_{s,t}(G_{s,t}(x)) U_{s,t}(x).
		\]
		With initial condition \(G_{0,t}(x) = x\), we have \(U_{0,t}(x) = \frac{\partial G_{0,t}(x)}{\partial t} = \frac{\partial x}{\partial t} = 0\). Taking the norm \(\|\cdot\|_{g,G_{s,t}(x)}\) at the point \(G_{s,t}(x)\):
		\[
		\left\| \frac{\partial}{\partial s} U_{s,t}(x) \right\|_{g,G_{s,t}(x)} \leq \left\| \frac{\partial Z_{s,t}}{\partial t}(G_{s,t}(x)) \right\|_{g,G_{s,t}(x)} + \left\|DZ_{s,t}(G_{s,t}(x))\right\|_{op} \left\|U_{s,t}(x)\right\|_{g,G_{s,t}(x)}.
		\]
		Let \(f(s) = \sup_{x \in M} \|U_{s,t}(x)\|_{g,G_{s,t}(x)}\). Then for any \(x \in M\):
		\[
		\left\| \frac{\partial}{\partial s} U_{s,t}(x) \right\|_{g,G_{s,t}(x)} \leq N + K \|U_{s,t}(x)\|_{g,G_{s,t}(x)} \leq N + K f(s).
		\]
		Integrating with respect to \(s\) from \(0\) to \(s'\):
		\[
		\|U_{s',t}(x)\|_{g,G_{s',t}(x)} - \|U_{0,t}(x)\|_{g,G_{0,t}(x)} \leq \int_0^{s'} \left( N + K f(s) \right) ds = Ns' + \int_0^{s'} K f(s) ds.
		\]
		Since \(U_{0,t}(x) = 0\), \(\|U_{0,t}(x)\|_{g,G_{0,t}(x)} = 0\). Thus, \(\|U_{s',t}(x)\|_{g,G_{s',t}(x)} \leq Ns' + \int_0^{s'} K f(s) ds\). Taking supremum over \(x \in M\) on the LHS:
		\[
		f(s') \leq Ns' + \int_0^{s'} K f(s) ds.
		\]
		By Grönwall's inequality, \(f(s') \leq Ns' + \int_0^{s'} K f(s) ds\) implies $$f(s') \leq Ns' + \int_0^{s'} KN e^{K(s'-s)} s ds \leq \frac{N}{K} (e^{Ks'} - 1).$$
		Therefore, \(\sup_{x \in M} \|U_{s',t}(x)\|_{g,G_{s',t}(x)} \leq \frac{N}{K}(e^{Ks'} - 1)\). Since \(V_{s',t}(G_{s',t}(x)) = U_{s',t}(x)\), we have\\ \(\|V_{s',t}(G_{s',t}(x))\|_{g,G_{s',t}(x)} = \|U_{s',t}(x)\|_{g,G_{s',t}(x)}\). Thus, for any \(y \in M\) (since \(G_{s,t}\) is a diffeomorphism), \(\|V_{s',t}(y)\|_{g,y} \leq \frac{N}{K}(e^{Ks'} - 1)\). For \((s',t) \in [0,1] \times [0,1]\), \(\|V_{s',t}(y)\|_{g,y} \leq \frac{N}{K}(e^{K} - 1)\) for all \(y \in M\). Hence, \(V_{s,t}\) is a bounded smooth two-parameter family of vector fields.
	\end{proof}

	\begin{lemma}\label{lem:mainineq}
		For any symplectic isotopy $\Phi$ from $\mathrm{id}$ to $\phi\in Ham(M,\omega)$ and a loop $\Psi$ based at $\mathrm{id}$ such that both paths $\Phi$ and $\Psi$ have the same flux.  We have
		\[
		\|\phi\|_H^{(1,\infty)} 
		\le \Bigl(1 + 4 L_0 \sup_{(s,t,x)} \bigl\| V_{s,t}^{\Phi \ast_l \Psi^{-1}}(x) \bigr\|_g \Bigr) \; 
		l_{HL}^\infty\bigl(\Phi \ast_l \Psi^{-1}\bigr),
		\]
		where $V_{s,t}^{\Phi \ast_l \Psi^{-1}}$ is the harmonic part of the generating vector field of the concatenated isotopy, and $L_0$ is a constant depending only on $(M,\omega,g)$.
	\end{lemma}
	\begin{proof}
		Let $\Phi \in Iso(\phi)_\omega$ be any symplectic isotopy from the identity to $\phi$. Consider the set $\Upsilon(\Phi) \subseteq \pi_1(G_\omega(M))$ of homotopy classes of loops at the identity in $G_\omega(M)$ with the same flux as $\Phi$, as defined in Subsection \ref{sec:flux_role}. For any loop $\Psi$ representing a class $[\Psi] \in \Upsilon(\Phi)$, we form a new symplectic isotopy by left concatenation: $\Xi = \Phi \ast_l \Psi^{-1}$. By construction, $\Xi$ is a symplectic isotopy from the identity to $\phi \circ \text{Id} = \phi$, and its flux is:
		\[
		\widetilde{Flux}([\Xi]) = \widetilde{Flux}([\Phi \ast_l \Psi^{-1}]) = \widetilde{Flux}([\Phi]) + \widetilde{Flux}([\Psi^{-1}]) = \widetilde{Flux}([\Phi]) - \widetilde{Flux}([\Psi]).
		\]
		Since $[\Psi] \in \Upsilon(\Phi)$, $\widetilde{Flux}([\Psi]) = \widetilde{Flux}([\Phi])$, so $\widetilde{Flux}([\Xi]) = 0$. Thus, $\Xi$ is homotopic (rel. endpoints) to a Hamiltonian isotopy. Let $(U^\Xi, \mathcal{H}^\Xi)$ be the generator of $\Xi = \Phi \ast_l \Psi^{-1}$ \cite{TD2}. Since $\widetilde{Flux}([\Xi]) = 0$, the harmonic part $\mathcal{H}^\Xi = \{\mathcal{H}_t^\Xi\}_{t \in [0,1]}$ is ``flux-trivial'' (it represents the zero class in $H^1_{dR}(M, \mathbb{R})$). In fact, we can deform $\Xi$ (while keeping endpoints fixed) to a path $\Theta$ with zero harmonic part in its Hodge decomposition (A Hamiltonian isotopy, \cite{Ban78}). Let $\Theta = \{\theta_t\}_{t \in [0,1]}$ be a Hamiltonian isotopy homotopic to $\Xi$ (rel. endpoints). Let $(U^\Theta, \mathcal{H}^\Theta)$ be the generator of $\Theta$. Then, $\mathcal{H}^\Theta = 0$, and $U^\Theta = \{U_t^\Theta\}_{t \in [0,1]}$ is a normalized Hamiltonian. We apply the construction from Subsection \ref{Two-parameter vector fields} to the harmonic part of $\Xi$, which is $\mathcal{H}^\Xi$. Let $\{X_t\}_{t \in [0,1]}$ be the harmonic vector fields corresponding to $\mathcal{H}^\Xi = \{\mathcal{H}_t^\Xi\}_{t \in [0,1]}$, i.e., $\iota_{X_t} \omega = \mathcal{H}_t^\Xi$. Construct two-parameter families $\{Z_{s,t}\}, \{V_{s,t}\}, \{G_{s,t}\}$ as in Subsection \ref{Two-parameter vector fields}. It follows from \cite{Ban10} that $U_t^\Theta = \int_0^1 \omega(Z_{s,t}, V_{s,t}) ds$, for each $t$.  By Corollary \ref{co1}, we have:
		\[
		osc(U_t^\Theta) = osc\left(\int_0^1 \omega(Z_{s,t}, V_{s,t}) ds\right) \leq 4 L_0 \left( \sup_{(s,t,x)} \|V_{s,t}(x)\|_g \right) \left( \sup_{t} \|X_t\|_{HL} \right).
		\]
		Integrating over $t \in [0,1]$:
		\[
		\int_0^1 osc(U_t^\Theta) dt \leq \int_0^1 4 L_0 \left( \sup_{(s,t,x)} \|V_{s,t}(x)\|_g \right) \left( \sup_{t} \|X_t\|_{HL} \right) dt = 4 L_0 \left( \sup_{(s,t,x)} \|V_{s,t}(x)\|_g \right) \left( \sup_{t} \|X_t\|_{HL} \right).
		\]
		On the other hand, if $\{\psi_t\}$ is the Hamiltonian part in the Hodge decomposition of $\Xi$, then we combine the above estimate with the inequality $l_H^{(1,\infty)}(\{\psi_t\}) \leqslant l_H^{\infty}(\Xi)$ (see \cite{Ban10}, \cite{TD2}) to derive that
		\[
		\|\phi\|_{H}^{(1,\infty)}\leqslant \left(  1 + 4 L_0 \left( \sup_{(s,t,x)} \|V_{s,t}(x)\|_g \right)\right) l_{HL}^{\infty}(\Phi \ast_l \Psi^{-1}),
		\]
		because $ \sup_{t} \|X_t\|_{HL}  \leqslant l_{HL}^{\infty}(\Phi \ast_l \Psi^{-1})$.  	The upper bound factor $ \left(  1 + 4 L_0 \left( \sup_{(s,t,x)} \|V_{s,t}(x)\|_g \right)\right)$ is finite by Proposition \ref{Pro-1}. 
		This completes the proof.
	\end{proof}
	
	\subsection*{Proof of Theorem \ref{Flux-0}}
	
	\begin{theorem}
		Let $\phi$ be a Hamiltonian diffeomorphism. Then
		\[
		\|\phi\|_H^{(1,\infty)} \le \|\phi\|_{HL}^\infty.
		\]
	\end{theorem}
	
	\begin{proof}
		By definition of the norm $\|\cdot\|_{HL}^\infty$, we can pick any symplectic isotopy $\Phi' \in Iso(\phi)_\omega$ and any loop $\Psi_0\in \Upsilon(\Phi')$ at the identity map. 
		Applying Lemma~\ref{lem:mainineq} with $\Phi = \Phi'$ and $\Psi = \Psi_0$, we get 
		\[
		\|\phi\|_{H}^{(1,\infty)}\leqslant \left(  1 + 4 L_0 \left( \sup_{(s,t,x)} \|V_{s,t}(x)\|_g \right)\right) l_{HL}^{\infty}(\Phi' \ast_l \Psi^{-1}_0).
		\]
		
		If $\Phi'\ast_l \Psi^{-1}_0$ is a Hamiltonian isotopy, then its harmonic part is the constant path identity, leading to $V_{s,t}^{\Phi'\ast_l \Psi^{-1}_0} = 0$.  Therefore, taking the infimum over  $\Phi'\in Iso(\phi)$ and $\Psi_0\in \Upsilon(\Phi')$ in the above inequalities gives 
		\begin{align*}
			\|\phi\|_{H}^{(1,\infty)} &\leqslant 4 L_0 \inf_{\Phi'\in Iso(\phi)}\left( \inf_{\Psi_0\in \Upsilon(\Phi')}\left( \sup_{(s,t,x)} \|V_{s,t}(x)\|_g \, l_{HL}^{\infty}(\Phi' \ast_l \Psi^{-1}_0)\right)\right) \\
			&\qquad + \inf_{\Phi'\in Iso(\phi)}\left( \inf_{\Psi_0\in \Upsilon(\Phi')}\left( l_{HL}^{\infty}(\Phi' \ast_l \Psi^{-1}_0)\right) \right).
		\end{align*}
		Since $ \inf_{\Phi'\in Iso(\phi)}\left( \inf_{\Psi_0\in \Upsilon(\Phi')} \sup_{(s,t,x)} \|V_{s,t}(x)\|_g\right)  = 0$, and $l_{HL}^{\infty}(\Phi' \ast_l \Psi^{-1}_0)$ is finite for each $ \Phi'\ast_l \Psi^{-1}_0\in\Gamma_\phi$, it follows that 
		\[
		\inf_{\Phi'\in Iso(\phi)}\left( \inf_{\Psi_0\in \Upsilon(\Phi')}\left( \sup_{(s,t,x)} \|V_{s,t}(x)\|_g \, l_{HL}^{\infty}(\Phi' \ast_l \Psi^{-1}_0)\right)\right)  = 0.
		\]
		Thus, we obtain 
		\[
		\|\phi\|_{H}^{(1,\infty)}\leqslant \inf_{\Phi'\in Iso(\phi)}\left( \inf_{\Psi_0\in \Upsilon(\Phi')}\left( l_{HL}^{\infty}(\Phi' \ast_l \Psi^{-1}_0)\right) \right).
		\]
		By Lemma \ref{lem:hofer_like_infimum}, the right-hand side equals $e_{HL}^\infty(\phi)$. Hence $\|\phi\|_H^{(1,\infty)} \le e_{HL}^\infty(\phi)$.
		
		The same argument applied to $\phi^{-1}$ yields $\|\phi^{-1}\|_H^{(1,\infty)} \le e_{HL}^\infty(\phi^{-1})$. Because the Hofer norm is symmetric, $\|\phi^{-1}\|_H = \|\phi\|_H$. Therefore,
		\[
		\|\phi\|_H \le \min\!\bigl(e_{HL}^\infty(\phi),\, e_{HL}^\infty(\phi^{-1})\bigr)
		\le \frac{e_{HL}^\infty(\phi) + e_{HL}^\infty(\phi^{-1})}{2}
		= \|\phi\|_{HL}^\infty.
		\]
		
	\end{proof}
	The opposite inequality $\|\phi\|_{HL}^\infty \le \|\phi\|_H$ is immediate because every Hamiltonian isotopy is a symplectic isotopy, and for Hamiltonian isotopies the Hofer-like length coincides with the Hofer length. Hence we have equality $\|\phi\|_{HL}^\infty = \|\phi\|_H$, and the same identity holds for the $L^{(1,\infty)}$ versions by the known coincidence of the $L^\infty$ and $L^{(1,\infty)}$ norms on $\Ham(M,\omega)$.
	\section{Consequences of Theorem \ref{Flux-0}} \label{Sec-4}
	In this section, we put Theorem \ref{Flux-0} in further perspective by providing streamlined proofs of several important results in Hofer-like geometry by drawing upon analogous results in Hamiltonian dynamics. We begin by recalling a key definition.
	
	\begin{definition}\label{symplecticdisplacement}
		The {\bf symplectic displacement energy} $e_s(U)$ of a non-empty subset $U \subset M$ is defined as:
		\[ e_s(U) = \inf\{ \|\phi\|_{HL}^{(1, \infty)} \mid \phi \in G_\omega(M),  \phi(U)\cap U = \emptyset\} \]
		if some element of $G_\omega(M)$ displaces $U$, and $e_s(U) = +\infty$ if no element of
		$G_\omega(M)$ displaces $U$.
	\end{definition}
	If $U$ and $V$ are non-empty subsets of $M$ such that $U\subset V$, then clearly $e_s(U) \leq e_s(V)$.
	
	\begin{corollary}[Quasi-Isometry Between Hofer and Hofer-like Geometries]
		Let $(M, \omega)$ be a closed symplectic manifold. The group $Ham(M, \omega)$, equipped with the Hofer norm $\|\cdot\|_H$ and the Hofer-like norm $\|\cdot\|_{HL}$, is isometric. In particular, for all $\phi \in Ham(M, \omega)$,
		\[
		\|\phi\|_H = \|\phi\|_{HL}.
		\]
	\end{corollary}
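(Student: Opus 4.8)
The plan is to deduce this corollary directly from Theorem \ref{Flux-0}, which already does all the substantive work. A quasi-isometry between two norms asks only for a two-sided multiplicative comparison with some positive constants $C_1, C_2$; but Theorem \ref{Flux-0} establishes the far stronger statement that the two norms coincide exactly on $Ham(M,\omega)$, namely $\|\phi\|_{HL} = \|\phi\|_{H}$ for every $\phi \in Ham(M,\omega)$. So there is essentially nothing left to estimate, and the result follows by specializing the constants.

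Concretely, I would proceed as follows. First, invoke Theorem \ref{Flux-0} to obtain $\HLnorm{\phi}{(1,\infty)} = \|\phi\|_{H}$ for all $\phi \in Ham(M,\omega)$. Then take $C_1 = C_2 = 1$: the desired chain
\[
C_1 \|\phi\|_{H} \leq \|\phi\|_{HL} \leq C_2 \|\phi\|_{H}
\]
collapses to the single equality, both inequalities being immediate. In other words, the identity map $\bigl(Ham(M,\omega), \|\cdot\|_{H}\bigr) \to \bigl(Ham(M,\omega), \|\cdot\|_{HL}\bigr)$ is in fact an \emph{isometry}, which is the sharpest possible form of a quasi-isometry. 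One is free to record any pair with $C_1 \leq 1 \leq C_2$, but $C_1 = C_2 = 1$ is optimal and makes the geometric content transparent: Hofer and Hofer-like geometries are not merely comparable on the Hamiltonian subgroup, they are identical.

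The only point requiring a moment's care, and the closest thing to an obstacle, is matching conventions: one must check that the unadorned norm $\|\cdot\|_{HL}$ in the corollary refers to the same $L^{(1,\infty)}$-Hofer-like norm $\HLnorm{\cdot}{(1,\infty)}$ that Theorem \ref{Flux-0} treats. Since the proof of that theorem also uses the identity $e_{HL}^{\infty}(\phi) = e_{HL}^{(1,\infty)}(\phi)$, the coincidence with the Hofer norm holds for both the $L^\infty$ and the $L^{(1,\infty)}$ versions of the Hofer-like norm, so the corollary is insensitive to this choice. No genuinely new analytic estimate is needed; the statement is a formal consequence of the main theorem recorded for emphasis and for use in transferring non-degeneracy and displacement-energy bounds from the Hofer to the Hofer-like setting.
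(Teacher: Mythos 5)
Your proposal is correct and matches the paper's own proof exactly: both invoke Theorem \ref{Flux-0} to get the equality $\|\phi\|_{HL} = \|\phi\|_{H}$ on $Ham(M,\omega)$ and then take $C_1 = C_2 = 1$, observing that the identity map is in fact an isometry. The extra remark on matching the $L^{(1,\infty)}$ versus $L^\infty$ conventions is a reasonable, harmless addition.
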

	
	\begin{proof}
		From Theorem \ref{Flux-0}, we have the equality $\|\phi\|_H = \|\phi\|_{HL}$ for all $\phi \in Ham(M, \omega)$. This directly implies the isometry.
	\end{proof}
	
	\begin{corollary}[Nondegeneracy of the Hofer-like Norm]\label{cor:Hofer Norm}
		Let $(M, \omega)$ be a closed symplectic manifold with nontrivial flux group.
		The nondegeneracy of the Hofer-like norm follows from the  nondegeneracy of the Hofer norm. 
	\end{corollary}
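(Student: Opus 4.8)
The plan is to produce a strictly positive lower bound for the Hofer-like norm of every non-identity Hamiltonian diffeomorphism and then to carry nondegeneracy across to the Hofer norm using the main theorem. Recall that nondegeneracy of a norm $\|\cdot\|$ on $Ham(M,\omega)$ means $\|\phi\| = 0 \iff \phi = \Id$; the implication $\phi = \Id \Rightarrow \|\phi\| = 0$ is immediate, so all the content lies in the converse, which I state contrapositively: for each $\phi \in Ham(M,\omega)$ with $\phi \neq \Id$ it suffices to show $\HLnorm{\phi}{(1,\infty)} > 0$.

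First I would invoke Proposition \ref{Posit-1}(2). By hypothesis $\Gamma_\omega \neq \{0\}$, and since $\Gamma_\omega$ is discrete \cite{Ono}, the quantity $m := \inf_{\gamma \in \Gamma_\omega \setminus \{0\}} \|\gamma\|_{L^2-Hodge}$ is strictly positive. For Hamiltonian $\phi$ there is a clean simplification of the second infimum appearing in that proposition: every symplectic isotopy $\varXi$ from $\Id$ to $\phi$ satisfies $\widetilde{Flux}(\varXi) \in \Gamma_\omega$, because $\Flux(\phi) = 0$ in $H^1_{dR}(M,\mathbb{R})/\Gamma_\omega$ forces any lift of the flux to lie in $\Gamma_\omega$. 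Hence any such $\varXi$ with nontrivial flux has $\widetilde{Flux}(\varXi) \in \Gamma_\omega \setminus \{0\}$, so that $m' := \inf_{\varXi} \|\widetilde{Flux}(\varXi)\|_{L^2-Hodge} \geq m$, and therefore $\min\{m, m'\} = m$. Proposition \ref{Posit-1}(2) then yields
\[
e_{HL}^{(1,\infty)}(\phi) \geq \frac{m}{2\sqrt{Vol(M)}} > 0.
\]

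Next I would deduce nondegeneracy of the Hofer-like norm itself. Because $e_{HL}^{(1,\infty)}(\phi^{-1}) \geq 0$ as an infimum of nonnegative Hofer-like lengths, the defining formula for the norm gives
\[
\HLnorm{\phi}{(1,\infty)} = \frac{1}{2}\left(e_{HL}^{(1,\infty)}(\phi) + e_{HL}^{(1,\infty)}(\phi^{-1})\right) \geq \frac{1}{2} e_{HL}^{(1,\infty)}(\phi) \geq \frac{m}{4\sqrt{Vol(M)}} > 0,
\]
which proves $\HLnorm{\cdot}{(1,\infty)}$ is nondegenerate on $Ham(M,\omega)$. Finally, to obtain the ``Hence'' clause, I would apply Theorem \ref{Flux-0}: it gives $\|\phi\|_H = \HLnorm{\phi}{(1,\infty)}$ for every $\phi \in Ham(M,\omega)$, so the equivalences $\|\phi\|_H = 0 \iff \HLnorm{\phi}{(1,\infty)} = 0 \iff \phi = \Id$ hold, establishing nondegeneracy of the Hofer norm.

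The argument is essentially a specialization of Proposition \ref{Posit-1} and Theorem \ref{Flux-0}, so I do not anticipate a genuine obstacle; the only delicate point is confirming that the lower bound of Proposition \ref{Posit-1}(2) does not degenerate to zero when restricted to Hamiltonian maps. This is exactly what the observation $m' \geq m$ secures: it pins the minimum at $m > 0$ and removes any worry that the flux-based infimum might vanish. I would flag that the hypothesis $\Gamma_\omega \neq \{0\}$ enters only through this positivity step, whereas the transfer to the Hofer norm rests solely on the norm identity of Theorem \ref{Flux-0}.
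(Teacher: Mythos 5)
You take a genuinely different route from the paper. The paper argues by contradiction through displacement energy: if $\phi \neq \Id$ and $\HLnorm{\phi}{(1,\infty)} = 0$, then $\phi$ displaces some open ball $B$, forcing $e_s^H(B) = 0$, which contradicts the positivity of the displacement energy supplied by Corollary \ref{cor:displacement-energy}; Theorem \ref{Flux-0} enters only in the final transfer to the Hofer norm. You instead apply Proposition \ref{Posit-1}(2) directly to $\phi$ itself, sharpened by the observation (correct, and nicely isolated) that every symplectic isotopy ending at a Hamiltonian diffeomorphism has flux in $\Gamma_\omega$, so the second infimum in that proposition is at least $m$.

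The problem is that your argument proves far too much, and this is a genuine defect rather than a strengthening. Your intermediate conclusion is the \emph{uniform} bound $\HLnorm{\phi}{(1,\infty)} \ge \frac{m}{4\sqrt{Vol(M)}}$ for every $\phi \in \Ham(M,\omega)\setminus\{\Id\}$. But $\HLnorm{\phi}{(1,\infty)} \le \|\phi\|_H$ trivially (restrict the infimum to Hamiltonian paths, as the paper itself notes in Proposition \ref{prop:hodge-respect}), and on any manifold with $\Gamma_\omega \neq \{0\}$ (e.g.\ $T^2$) there exist non-identity Hamiltonian diffeomorphisms of arbitrarily small Hofer norm: the time-one map $\phi^\epsilon$ of the flow of $\epsilon H$, with $H$ a fixed non-constant autonomous Hamiltonian, satisfies $\phi^\epsilon \neq \Id$ for small $\epsilon > 0$ and $\|\phi^\epsilon\|_H \le \epsilon\,\osc(H) \to 0$. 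So the uniform bound is false. Nondegeneracy requires only pointwise positivity, never a uniform constant; an argument that outputs a uniform constant on all of $\Ham(M,\omega)\setminus\{\Id\}$ must rest on a false ingredient. Here that ingredient is Proposition \ref{Posit-1}(2) itself: its proof upgrades ``continuity of $l_{HL}^{(1,\infty)}$ along a homotopy'' to equality of lengths of homotopic paths, which is untrue, and Part 1 of that proposition already fails for an inefficient choice of $\Phi$, since additivity gives $l_{HL}^{(1,\infty)}(\theta \ast_l \Phi) \ge l_{HL}^{(1,\infty)}(\Phi)$ for every loop $\theta$, while $e_{HL}^{(1,\infty)}(\phi)$ can be much smaller. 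To be fair, the paper's own proof is not immune—Corollary \ref{cor:displacement-energy} is derived from the same proposition—but its architecture (positivity of the displacement energy of a displaced set) is the standard one and is salvageable, because that positivity is a genuine theorem independent of Proposition \ref{Posit-1} (see \cite{Ban18}). Your architecture, which needs a map-level lower bound valid uniformly over $\Ham(M,\omega)$, cannot be repaired without abandoning Proposition \ref{Posit-1}(2) entirely.
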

	\begin{proof}
		Assume $\phi \in \Ham(M, \omega)$ with $\| \phi \|_{HL}= 0$. Then, it follows from \cite{Ban10} that the flux of $\phi$ is trivial. Thus, $\phi$ is Hamiltonian, and so the main result of this paper implies that: $\| \phi \|_{H} = \| \phi \|_{HL}= 0$.  Then, the nondegeneracy of the usual Hofer norm implies $\phi = \operatorname{id}_M$. 
	\end{proof}
	
	\begin{corollary}[Stability of symplectomorphisms $C^0$-Limits]\label{cor: C0-Limits}
		Let $(M,\omega)$ be a closed symplectic manifold. Let $\{\phi_i^t\}$ be a sequence of symplectic isotopies, let $\{\psi^t\}$ be another symplectic isotopy, and let $\phi : M\rightarrow M$ be a map such that $(\phi_i^1)$ converges uniformly to $\phi$, and $l_{HL}^{(1, \infty)}(\{\psi^t\}^{-1}\circ\{\phi_i^t\})\rightarrow0$ as $i\rightarrow\infty$. Then $\phi = \psi^1$.
	\end{corollary}
	
	\begin{proof}
		Since $l_{HL}^{\infty}(\{\psi^t\}^{-1}\circ\{\phi_i^t\})\rightarrow0$ as $i\rightarrow\infty$, then $\widetilde{Flux}(\{\psi^t\}^{-1}\circ\{\phi_i^t\})\in \Gamma_\omega$, and the time-one map $(\psi^1)^{-1}\circ\phi_i^1$ is Hamiltonian for sufficiently large $i$.  Since $\Gamma_\omega$ is a discrete group \cite{Ono}, we may assume that $ \widetilde{Flux}(\{\psi^t\}^{-1}\circ\{\phi_i^t\}) = 0$ for $i$ sufficiently large. Thus, for large $i$, the path $\{\psi^t\}^{-1}\circ\{\phi_i^t\}$ is homotopic (with fixed endpoints) to a Hamiltonian isotopy $\Theta_i$. Lemma \ref{lem:mainineq} implies that
		\[
		l_H^{(1, \infty)}(\Theta_i) \leq \left(1 + 4L_0 \sup_{s,t,z}\Arrowvert V_{s,t}^i(z) \Arrowvert_g\right) l_{HL}^{(1, \infty)}(\{\psi^t\}^{-1}\circ\{\phi_i^t\}),
		\] 
		for sufficiently large $i$, where $V_{s,t}^i := V_{s,t}^{\{\psi^t\}^{-1}\circ\{\phi_i^t\}}$.  Since the sequence of smooth families of vector fields $\{X_t^i\}$ (w.r.t Corollary \ref{co1}) converges uniformly to zero, it is not difficult to show that  $V_{s,t}^i$ is bounded as $i \to \infty$ (Proposition \ref{Pro-1}). Hence, $((\psi^1)^{-1}\circ\phi_i^1)$ converges uniformly to $(\psi^1)^{-1}\circ\phi$, and $l_{H}^{(1, \infty)}(\Theta^i)\rightarrow0$ as $i\rightarrow\infty$. Therefore, by a  $C^0$-type rigidity result of the Hofer norm (see e.g. \cite{Buhovsky-Seyfaddini} or the classical argument in \cite{Hof-Zen94}), we have $(\psi^1)^{-1}\circ\phi = \operatorname{id}_M$.
	\end{proof}
	
	\subsection*{Examples of Diameters}
	For the 2-sphere, $Ham(S^2, \omega_{std})$ is isomorphic to $SO(3)$, and its Hofer diameter is finite (equal to $\pi \cdot \mathrm{Area}(S^2)/2$, see \cite{LMP92}). By Theorem \ref{Flux-0}, the Hofer-like diameter of $Ham(S^2, \omega_{std})$ is also finite and equal to the same value.\\
	On the torus, the Hofer diameter of $Ham(T^2, \omega_{flat})$ is infinite, because the flux group is non‑trivial and allows Hamiltonian loops of arbitrarily large Hofer length. Theorem \ref{Flux-0} implies that the Hofer-like diameter is infinite as well.\\
	For $CP^n$ with the Fubini‑Study form, the Hofer diameter of $Ham(CP^n, \omega_{FS})$ is also infinite, and thus the Hofer-like diameter is infinite.
	
	\section*{Conclusion}
	A key result by Buss and Leclercq [\cite{BusLec11}, Proposition 2.6] concerns the degeneracy of their pseudo-distances $d_{(\mu,c)}$. They show that for $c \neq 0$, ``the degeneracy of $d_{(\mu,c)}$ is Hamiltonian, that is, $d_{(\mu,c)}(\operatorname{Id}, \phi) = 0$ if and only if $\phi \in \Ham(M, \omega)$.'' Our Theorem \ref{Flux-0}, which establishes the coincidence of the Hofer-like norm and the Hofer norm on $\Ham(M, \omega)$, offers a significantly simplified understanding of this ``Hamiltonian degeneracy'' in the context of Hofer-like geometry. Theorem \ref{Flux-0} directly implies that for any Hamiltonian diffeomorphism $\phi \in \Ham(M, \omega)$, $\|\phi\|_{HL}^{(1,\infty)} = \|\phi\|_{H}^{(1,\infty)}$. Since the Hofer norm $\|\cdot\|_{H}^{(1,\infty)}$ is known to be non‑degenerate on $\Ham(M, \omega)$, it immediately follows that the Hofer-like norm $\|\cdot\|_{HL}^{(1,\infty)}$ inherits this non‑degeneracy when restricted to $\Ham(M, \omega)$.
	
	This perspective reveals that the ``Hamiltonian degeneracy'' property observed by Buss and Leclercq for their pseudo-distances (and specifically for Hofer-like norms) is fundamentally a consequence of the non‑degeneracy of the underlying Hofer norm on Hamiltonian diffeomorphisms. Our norm equality theorem clarifies that within the Hamiltonian setting, the more complex Hofer-like construction, while incorporating flux information, ultimately retains the essential non‑degeneracy characteristic of standard Hamiltonian geometry as captured by the Hofer norm. This highlights that for Hamiltonian diffeomorphisms, the flux group, despite its presence in the broader Hofer-like framework, does not introduce new sources of degeneracy beyond those already present in the standard Hamiltonian context.

	\section*{Acknowledgments}
	This work is dedicated to Professor Augustin Banyaga, in heartfelt appreciation for his unwavering commitment to mathematics and for introducing me to the fascinating world of symplectic topology.
	
	I extend my deepest gratitude to the CNRS, DAAD, CIRM, and IMJ‑Paris for their invaluable support and contributions, which have been instrumental in my academic journey.
	
	\bibliographystyle{plain}

\begin{thebibliography}{99}
		
		\bibitem{Ban10}
		A. Banyaga, \emph{A Hofer‑like metric on the group of symplectic diffeomorphisms}, Contemp. Math. 512 (2010), 1–23.
		
		\bibitem{Ban78}
		A. Banyaga, \emph{Sur la structure des difféomorphismes qui préservent une forme symplectique}, Comment. Math. Helv. 53 (1978), 174–227.
		
		\bibitem{Ban18}
		A. Banyaga, E. Hurtubise, and P. Spaeth, \emph{On the symplectic displacement energy}, J. Symplectic Geom. 16 (2018), no. 1, 69–83.
		
		\bibitem{BusLec11}
		G. Buss and R. Leclercq, \emph{Pseudo‑distance on symplectomorphisms groups and application to the flux theory}, Math. Z. 272 (2012), 1001–1022.
		
		\bibitem{Han}
		Z. Han, \emph{Bi‑invariant metrics on the group of symplectomorphisms}, preprint.
		
		\bibitem{Hofer}
		H. Hofer, \emph{On the topological properties of symplectic maps}, Proc. Roy. Soc. Edinburgh 115A (1990), 25–38.
		
		\bibitem{Hof-Zen94}
		H. Hofer and E. Zehnder, \emph{Symplectic invariants and Hamiltonian dynamics}, Birkhäuser, 1994.
		
		\bibitem{Ono}
		K. Ono, \emph{Floer–Novikov cohomology and the flux conjecture}, Geom. Funct. Anal. 16 (2006), no. 5, 981–1020.
		
		\bibitem{La-Pol}
		F. Lalonde and L. Polterovich, \emph{Diffeomorphisms as isometries of Hofer’s norm}, Topology 36 (1997), 711–727.
		
		\bibitem{Hirsch76}
		M. W. Hirsch, \emph{Differential Topology}, Springer, 1976.
		
		\bibitem{Polt93}
		L. Polterovich, \emph{The Geometry of the Group of Symplectic Diffeomorphisms}, Lectures in Mathematics ETH Zürich, Birkhäuser, 2001.
		
		\bibitem{LMP92}
		F. Lalonde, D. McDuff and L. Polterovich, \emph{On the Flux Conjecture}, CRM Proceedings and Lecture Notes, Volume 3, 1994, pp. 69–85.
		
		\bibitem{TD2}
		S. Tchuiaga, \emph{On symplectic dynamics}, Differential Geom. Appl. 56 (2018), 170–196.
		
		\bibitem{TD3}
		S. Tchuiaga, \emph{The group of strong symplectic homeomorphisms and continuous symplectic dynamics}, PhD thesis, University of Abomey‑Calavi, 2012.
		
		\bibitem{Buhovsky-Seyfaddini}
		L. Buhovsky and S. Seyfaddini, \emph{Uniqueness of generating Hamiltonians for continuous Hamiltonian flows}, J. Symplectic Geom. 11 (2013), no. 1, 37–52.
		\bibitem{Rudin76}
		W. Rudin, \emph{Principles of Mathematical Analysis}, 3rd ed., McGraw‑Hill, 1976.
		
		\bibitem{AliprantisBorder06}
		C. D. Aliprantis and K. C. Border, \emph{Infinite Dimensional Analysis: A Hitchhiker's Guide}, 3rd ed., Springer, 2006.
	\end{thebibliography}

\end{document}